\theoremstyle{plain}
\newtheorem*{conj*}{Root Groups Conjecture}
\newtheorem*{thm1.2}{(1.2) Theorem}
\newtheorem*{thm1.3}{(1.3) Theorem}
\newtheorem*{thm1.4}{(1.4) Theorem}
\newtheorem*{prop*}{Proposition}
\newtheorem{prop}{Proposition}[section]
\newtheorem{thm}[prop]{Theorem}
\newtheorem{lemma}[prop]{Lemma}
\theoremstyle{definition}
\newtheorem*{Def*}{Definition}
\newtheorem*{notation*}{Notation}
\newtheorem{remark}[prop]{Remark}
\numberwithin{equation}{section}
\begin{document}


\title{On the third homotopy group of Orr's space}
\author[Emmanuel D.~Farjoun and Roman Mikhailov]
{Emmanuel~D.~Farjoun  and Roman Mikhailov}

\address{Emmanuel D.~Farjoun\\
        Department of Mathematics\\
        Hebrew University of Jerusalem, Givat Ram\\
        Jerusalem 91904\\
        Israel}
\email{farjoun@math.huji.ac.il}
\date{\today}

\address{Roman Mikhailov, Chebyshev Laboratory, St. Petersburg State University, 14th Line, 29b,
Saint Petersburg, 199178 Russia\\ and  St. Petersburg Department of
Steklov Mathematical Institute\\
and School of Mathematics, Tata Institute of Fundamental Research\\
Mumbai 400005, India}
\email{romanvm@mi.ras.ru}
\maketitle

\begin{abstract} K. Orr defined a Milnor-type invariant of links that lies in the third homotopy
group of a certain space $K_\omega.$ The problem of non-triviality of this third homotopy group has been open. We show that it is an infinitely generated group. The question of realization of its elements as links remains open.
\end{abstract}

\section{Introduction}

In [M2] John Milnor defines his $\bar\mu$-invariants of links, which extract numerical invariants from the lower central series quotients of the link group and the homotopy classes of the link longitudes. His questions motivated generations of research concerning these invariants. While most of Milnor’s original questions have been answered (see, for instance, \cite{O1}, \cite{O2}, \cite{IO}, \cite{C1}, \cite{HL}), one key question remains open. Milnor asked if we can extract similar invariants from $\pi/\pi_\omega$. The first candidate for a transfinite Milnor invariant was given by K. Orr in \cite{O1}, where he suggested a possible domain for these invariants, a space he denoted $K_\omega$. A transfinite Milnor invariant is then an element in the third homotopy group, $\pi_3$, of this space. Extensions and refinements were introduced by J. P. Levine in [L1, L2], as mentioned toward the end of the introduction.

The space defined by Orr, denoted by  $K_\omega,$ is the mapping cone  of the natural map
$$
K(F,1)\to K(\widehat F,1).
$$
Here $\widehat F$ is the pro-nilpotent completion of $F$, i.e. $\widehat F={\sf lim}\ F/\gamma_i(F),$ where $\{\gamma_i(F)\}_{i\geq 1}$ is the lower central series of $F$. Let $L\subset S^3$ be a link, $G=\pi_1(S^3\setminus L)$ its group and $f: F\to G$ a meridian homomorphism, where $F$ is the free group of the same rank and number of components of $L$. Assume that all Milnor's invariants  $\bar\mu$-invariants of $L$ are zero. This means in fact that the homomorphism $f$ induces isomorphisms of all finite lower central quotients and therefore, an isomorphism $\widehat F\simeq \widehat G$. Thus we have a map $S^3\setminus L\to K(\widehat F,1)$, which extends to a map $S^3\to K_\omega$. The homotopy class of this map in $\pi_3(K_\omega)$ defines an invariant $\theta_\omega$; see \cite{O2} for details and discussion. The main aim of the present note is to show that the the third homotopy group $\pi_3 K_\omega$ is non trivial, in fact that it is  infinitely generated.

The space $K_\omega$ is simply-connected. This follows from the fact that, for a finitely generated  group $G$, the pro-nilpotent completion $\widehat G$ is normally generated by the images of $G$ under the natural injection $G\to \widehat G$; (see 2.2 below, observe that, this is not true  for a free group of infinite rank) see \cite{Bousfield}. For the case $G=F$ the free group with at least  two generators it is shown in \cite{Bousfield} that $H_2(\widehat F)$ is uncountable, hence the same is true for $\pi_2(K_\omega)\simeq H_2(K_\omega)\simeq H_2(\widehat F).$ Cochran and Orr conjectured that
 this the relevant third homotopy group is non vanishing: The following is written in \cite{C}: ``{\it Bousfield has shown that $\pi_2(K_\omega)$ is infinitely generated and it appears likely that the same will hold for all the homotopy groups of $K_\omega$.}"

 In light of Orr's Milnor-type invariant  in $\pi_3(K_{\omega})$ the question about non-triviality of higher homotopy groups of $K_\omega$ is of interest, and seems to have been open. The following is the main result of the present note.

\begin{thm}\label{mainresult}
The third homotopy group
 $\pi_3(K_\omega)$ is  infinitely generated.
\end{thm}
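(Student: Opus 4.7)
The plan is to combine Bousfield's uncountability result for $H_2(\widehat F)$ with J.H.C.~Whitehead's \emph{certain exact sequence} for a simply-connected space,
$$H_4(K_\omega)\;\longrightarrow\;\Gamma(\pi_2(K_\omega))\;\longrightarrow\;\pi_3(K_\omega)\;\longrightarrow\;H_3(K_\omega)\;\longrightarrow\;0,$$
in which $\Gamma$ is Whitehead's quadratic functor. Since the right-hand map is surjective, it would suffice to show that $H_3(K_\omega)$ is infinitely generated. The back-up strategy, if the $H_3$-route proves intractable, is to argue instead that the cokernel of $H_4(K_\omega)\to\Gamma(\pi_2(K_\omega))$ is infinitely generated, exploiting the uncountability of $\pi_2(K_\omega)$ via Whitehead squares.

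First I would pin down $H_\ast(K_\omega)$. Since $F$ is free and finitely generated, $H_n(F)=0$ for $n\ge 2$, so the reduced-homology long exact sequence of the cofiber sequence $K(F,1)\to K(\widehat F,1)\to K_\omega$ immediately gives $H_n(K_\omega)\cong H_n(\widehat F)$ for all $n\ge 3$. The same conclusion holds for $n=2$ after checking that $F_{ab}\hookrightarrow \widehat F_{ab}$ is injective, which follows because the composite $F_{ab}\to\widehat F_{ab}\to F/\gamma_2(F)=F_{ab}$ is the identity. Simple-connectivity and Hurewicz then yield $\pi_2(K_\omega)\cong H_2(\widehat F)$, uncountable by Bousfield, so in particular $\Gamma(\pi_2(K_\omega))$ is uncountable and one has an enormous supply of potential $\pi_3$-classes entering the Whitehead sequence.

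The main step is to analyze $H_3(\widehat F)$. I would approximate $\widehat F$ by the tower of finitely generated torsion-free nilpotent groups $N_i := F/\gamma_i(F)$, compute $H_\ast(N_i)$ inductively via the Lyndon-Hochschild-Serre spectral sequence of the central extensions $\gamma_i(F)/\gamma_{i+1}(F)\hookrightarrow N_{i+1}\twoheadrightarrow N_i$, and relate $H_3(\widehat F)$ to the inverse system $\{H_3(N_i)\}$ by a Milnor-type $\varprojlim/\varprojlim^1$ analysis in the style of Bousfield's original argument for $H_2$. Because the graded Lie components $\gamma_i(F)/\gamma_{i+1}(F)$ have ranks growing polynomially in $i$ by Witt's formula, the tower cannot stabilize; Mittag-Leffler fails drastically, and one expects uncountably many independent classes to persist in $H_3(\widehat F)$. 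The hard part — the technical heart of the proof — is to convert this unbounded tower growth into a genuine infinite-generation statement for $H_3(\widehat F)$, or alternatively for the cokernel of $H_4(K_\omega)\to\Gamma(\pi_2(K_\omega))$, while controlling the $\varprojlim^1$-pathologies and topological subtleties that make naive homological computations with the completion $\widehat F$ notoriously slippery.
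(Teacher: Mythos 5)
Your framework is the right one --- the paper also runs the argument through Whitehead's exact sequence and, since it too cannot decide whether $H_3(K_\omega)$ vanishes, aims at the kernel of the Hurewicz map, i.e.\ at the cokernel of $w\colon H_4(K_\omega)\to\Gamma^2(\pi_2(K_\omega))$. But both of your concrete routes stall exactly where the real work begins. The primary route (show $H_3(\widehat F)$ is infinitely generated by a $\varprojlim/\varprojlim^1$ analysis of the tower $H_3(F/\gamma_i F)$) is precisely what the authors say they cannot do; failure of Mittag--Leffler and the growth of Witt ranks give no control over the relevant $\varprojlim^1$ terms in degree $3$, and no analogue of Bousfield's $H_2$ computation is known there. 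The back-up route is also not yet an argument: uncountability of $\Gamma^2(\pi_2(K_\omega))$ by itself says nothing about the cokernel of $w$, because $H_4(K_\omega)\cong H_4(\widehat F)$ is equally uncontrolled and could in principle surject onto $\Gamma^2(\pi_2(K_\omega))$.

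The missing idea is a comparison with a computable quotient that kills the $H_4$-term. The paper takes the epimorphism $F\to G=\mathbb Z\rtimes C_2$; here $\widehat G\cong\mathbb Z_2\rtimes C_2$ is explicitly computable, the cofibre $cof_G$ of $K(G,1)\to K(\widehat G,1)$ has $H_{2k}(cof_G)\cong\Lambda^{2k}(\mathbb Z_2)$, a $\mathbb Q$-vector space, and after rationalizing and projecting $\mathbb Z_2\otimes\mathbb Q$ onto a finite-dimensional $\mathbb Q^{\oplus n}$ with $n=2,3$ the source of the Whitehead map becomes $\Lambda^4(\mathbb Q^{\oplus n})=0$ while the target $\Gamma^2(\Lambda^2(\mathbb Q^{\oplus n}))$ is a nonzero rational vector space, hence infinitely generated as an abelian group. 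To transport this back to $K_\omega$ one still needs to know that $H_2(\widehat F)\to H_2(cof_G)=\Lambda^2(\mathbb Z_2)$ is onto, which the paper extracts from a Baer-invariant statement (Proposition \ref{lim1}): the cokernel is a quotient of $H_2(G)=\mathbb Z/2$, and $\Lambda^2(\mathbb Z_2)$ is divisible, so the cokernel vanishes. Surjectivity on $H_2$ then gives surjectivity on $\Gamma^2 H_2$ and hence a surjection of Whitehead cokernels. Without some such comparison your proposal has no mechanism for bounding the image of $H_4$, which is the actual content of the theorem.
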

\noindent  Since we cannot see why  $H_3(K_\omega)$ is not zero, we concentrate attention on  the kernel of Hurewicz map in dimension three in order to carry out  the argument. In fact, we prove that the kernel of the (surjective) Hurewicz homomorphism $\pi_3(K_{\omega}) \to H_3(K_{\omega})$ is an infinitely generated.

Observe that the nontriviality of $\pi_3(K_\omega)$ does not imply the existence of links with non-zero invariants $\theta_\omega$. In order to solve that realization problem, another space $K_\infty$ was defined in \cite{L1}. The definition of $K_\infty$ is similar to $K_\omega$, the difference being  that the algebraic closure is used instead of pro-nilpotent completion. Any element from $\pi_3(K_\infty)$ can be realized as an invariant for a certain link; however, the problem of whether   the Levine group $\pi_3(K_\infty)$ is non-zero is still very much open. The claim, without proof,  in Orr's thesis \cite{O1}
that all elements in his group  $\pi_3K_\omega$  are realizable as links is still unsubstantiated.  In fact, there is a well defined map
relating Levine's group and Orr's:  $l: \pi_3 K_\infty\to \pi_3 K_\omega.$  It is not hard to see from Levine arguments that the elements in the image
of this map are exactly the realizable elements.  We do not know whether this map  $l$ is surjective. In light of out results below it is probably not surjective.

\section{Preliminaries}

\subsection{Whitehead exact sequence}
In order to get a hold on $\pi_3(K_\omega)$ we use an approach due to J.H.C.Whitehead, who defined a certain exact sequence
relating the homology and homotopy groups of a space, and has a transparent form for 1-connected spaces such as $K_\omega.$
Recall the definition of J.\, H.\, C. Whitehead's {\it universal quadratic functor},
$\Gamma^2$, introduced in~\cite[Chapter II]{Wh}. For an abelian
group $A$, $\Gamma^2(A)$ is the group generated by the symbols
$\gamma(x)$, one for each $x\in A$, subject to the defining
relations
\begin{enumerate}
\item $\gamma(-x)=\gamma(x)$; \item
$\gamma(x+y+z)-\gamma(x+y)-\gamma(y+z)-\gamma(x+z)+\gamma(x)+\gamma(y)+\gamma(z)=0.$
\end{enumerate}
It follows from \cite{EM} that this group is naturally isomorphic to the fourth homology group
$$\Gamma^2(A)\cong H_4\,K(A,2).$$

Let us now recall a natural exact sequence associated to a connected pointed space $X$, due to J.H. C. Whitehead.
Let $X$ be a  pointed connected CW-complex with skeletal filtration
$$\mathrm{sk}_1(X)\subset \mathrm{sk}_2(X)\subset \cdots$$
Whitehead constructs the following long exact sequence in~\cite{Wh},
\begin{equation}\label{whitehead}
\cdots \pi_4(X)\buildrel{Hur_4}\over\longrightarrow H_4(X)\longrightarrow
\Gamma_3(X)\longrightarrow
\pi_3(X)\buildrel{Hur_3}\over\longrightarrow H_3(X)\longrightarrow
0,
\end{equation}
where
$$
\Gamma_3(X):=\mathrm{im}(\pi_3(\mathrm{sk}_{2}(X))\to
\pi_3(\mathrm{sk}_3(X)))
$$
and $Hur_i$ is the $i$th Hurewicz homomorphism.

There is a natural map $\Gamma^2(\pi_2(X))\to \Gamma_3(X)$, constructed as follows. Let $\eta\colon S^3\to S^2$ be the Hopf
map and let $x\in \pi_2(X)$ be expressed as the  following composition:
$$
S^2\to \mathrm{sk}_2(X)\hookrightarrow \mathrm{sk}_3(X).
$$
Then the composite
$$
S^3\buildrel{\eta}\over\to S^2\to\mathrm{sk}_2(X)\hookrightarrow
\mathrm{sk}_3(X)
$$
defines an element $\eta^*(x)\in \Gamma_3(X)$. According
to~\cite[Section 13]{Wh}, for a simply connected space $X$ the function:
\begin{equation}\label{eta1}
\eta_1\colon \Gamma^2(\pi_2(X))\to \Gamma_3(X), \ \
\gamma(x)\mapsto \eta^*(x)
\end{equation}
defines a natural homomorphism which  for a simply connected space $X$ is an isomorphism of groups.

In more direct terms, for a simply connected  space  the  following part of  Whitehead's exact sequence
\begin{equation}\label{whitehead1}
\pi_4(X)\to  H_4(X)\longrightarrow
H_4 K(\pi_2(X),2)\longrightarrow
\pi_3(X)\buildrel{Hur_3}\over\longrightarrow H_3(X)\longrightarrow
0,
\end{equation}
is a part of the Leray-Serre spectral sequence of the integral homology  of the (two connected cover-)
fibre sequence $ X[2]\to X\to K(\pi_2(X),2).$ In particular, we have an isomorphism $\Gamma^2 (\pi_2(X))\cong \Gamma_3 (X)\cong H_4 K(\pi_2 (X),2)$ for any  1-connected space $X.$

It follows directly from  Whitehead's exact sequence that in order to estimate $\pi_3(X)$ for a 1-connected space,  we
might  consider the cokernel of the natural map  of Whitehead

$$w_X\equiv w: H_4(X)\to \Gamma^2(\pi_2(X))\cong H_4K(H_2(X),2).$$

The evaluation of this cokernel for $X=K_\omega$  is  thus our main  concern from now on.
\subsection{Completions}
The following lemma follows the argument of Bousfield written in details in \cite{BS}. We give it here for completeness of exposition, in a more general form.
\begin{lemma}\label{lemmabs}
Let $H$ be a group and $\{x_1,\cdots, x_n\}$ a finite set of elements of $H$. Let
$$
H\unrhd N_1\unrhd N_2\unrhd \cdots
$$
be a central series of $H$, such that $[H, N_i]=N_{i+1},\ i=1,2,\cdots$. Suppose that, for every $i\geq 1,$ the quotient
$H/N_i$ is normally generated by $X\cdot N_i=\{x_1N_i,\cdots, x_nN_i\}.$ Consider the inverse limit:
$$
\Gamma:={\sf lim}\ H/N_i
$$
and the associated  natural map $h: H\to \Gamma$. The group $\Gamma$ is normally generated by the elements $h(X)=\{h(x_1),\cdots, h(x_n)\}.$
\end{lemma}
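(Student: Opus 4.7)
The plan is to exploit the induced central series $\Gamma_k := \ker(\Gamma \to H/N_k)$ of closed normal subgroups of $\Gamma$, which satisfies $\bigcap_k \Gamma_k = 1$, and to build an arbitrary $g \in \Gamma$ as a convergent product (in the inverse-limit topology) of conjugates of the $h(x_i)^{\pm 1}$. Let $M$ denote the normal subgroup of $\Gamma$ generated by $h(X)$; the goal is to show that every element of $\Gamma$ arises as such a convergent product, which is the meaning of ``normally generated'' appropriate to this pro-nilpotent setting.

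The two facts I would establish are: $(\alpha)$ $\Gamma = M \cdot \Gamma_k$ for every $k$, and $(\beta)$ $\Gamma_k = (M \cap \Gamma_k) \cdot \Gamma_{k+1}$ for every $k$. Fact $(\alpha)$ is immediate: the projection $\Gamma \to H/N_k$ sends $h(X)$ to $X \cdot N_k$, which normally generates $H/N_k$ by hypothesis, so the image of $M$ fills $H/N_k$. For $(\beta)$, given $\gamma \in \Gamma_k$, lift its image in $H/N_{k+1}$---which lies in $N_k/N_{k+1}$ since $\gamma \in \Gamma_k$---to some $n \in N_k$, and use the hypothesis applied to $H/N_{k+1}$ to find a finite product $n'$ of conjugates of the $x_i^{\pm 1}$ in $H$ with $n \equiv n' \pmod{N_{k+1}}$. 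Because $n \in N_k$, the congruence forces $n' \in N_k$ as well, so $h(n')$ belongs to $M \cap \Gamma_k$ and agrees with $\gamma$ modulo $\Gamma_{k+1}$.

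Iterating $(\beta)$ from the decomposition $g = m_1 \gamma^{(1)}$ provided by $(\alpha)$ produces elements $m_k \in M \cap \Gamma_{k-1}$ and residues $\gamma^{(k)} \in \Gamma_k$ such that $g = m_1 m_2 \cdots m_k \cdot \gamma^{(k)}$. The partial products lie in $M$ and differ from $g$ only by $\gamma^{(k)} \in \Gamma_k$; since $\bigcap_k \Gamma_k = 1$, they converge to $g$ in the inverse-limit topology, yielding $g = m_1 m_2 m_3 \cdots$ as a well-defined convergent product of conjugates of $h(x_i)^{\pm 1}$. The main obstacle---and the genuine content of the proof---is step $(\beta)$: producing the lift $n'$ inside $N_k$ uses crucially both the central-series relation $[H, N_i] = N_{i+1}$ (so that $N_k/N_{k+1}$ is abelian and controlled by the filtration) and the full hypothesis that $X$ normally generates every $H/N_{k+1}$. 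A concluding remark is required to note that the construction produces a convergent, generally infinite, product rather than a finite one; this is the sense of ``normally generated'' needed for the subsequent deduction that $K_\omega$ is simply connected.
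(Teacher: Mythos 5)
Your facts $(\alpha)$ and $(\beta)$ are correct, but they only establish that the normal closure $M$ of $h(X)$ is \emph{dense} in $\Gamma$ for the inverse-limit topology: $\Gamma=M\cdot\Gamma_k$ for every $k$. The limit $g=m_1m_2m_3\cdots$ is a genuinely infinite product, and an infinite convergent product of conjugates need not lie in $M$ itself; density does not give $M=\Gamma$. So this does not prove the lemma as stated, and your closing remark --- that the convergent-product sense of ``normally generated'' is the one needed downstream --- is exactly backwards. The lemma is invoked to show that the cofibre of $K(G,1)\to K(\widehat G,1)$ is simply connected, and van Kampen computes $\pi_1$ of that cofibre as $\widehat G$ modulo the honest algebraic normal closure of the image of $G$, i.e.\ \emph{finite} products of conjugates. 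The weaker topological statement would not suffice.

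The idea you are missing is how the paper converts the infinite correction process into a finite expression. For $a\in\Gamma$ with trivial image in $H/N_1$, the paper writes $a=[g_1,h(x_1)]\cdots[g_n,h(x_n)]$ \emph{exactly}, as a product of a fixed number $n$ of commutators; the conjugating elements $g_i=(y^i_jN_j)_{j\ge 1}$ are themselves constructed coordinate by coordinate as elements of the inverse limit. At each stage the error term $r_{k+2}\in N_{k+1}$ is absorbed by replacing $y^i_k$ with $y^i_{k+1}=y^i_kv_i$, where $v_i\in N_k$, so the sequences remain coherent and define honest elements of $\Gamma$ --- rather than by appending new factors to the product, which is what your scheme does. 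This relies on a sharpened form of the normal-generation hypothesis (every $u\in N_i$ is congruent mod $N_{i+1}$ to a product of exactly $n$ commutators $[u_j,x_j]$ with $u_j\in N_{i-1}$), which follows from $[H,N_i]=N_{i+1}$ together with centrality of the series; the general case then reduces to the case $a_1\in N_1$ by normal generation of $H/N_1$. Without some device of this kind your construction cannot be closed up into a finite product, and the lemma remains unproved.
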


\begin{proof}
The normal generation condition implies that, for every $u\in N_i,\ i\geq 2,$ there exist elements $u_1,\cdots, u_n\in N_{i-1},$ such that
$$
u \equiv [u_1,x_1]\cdots [u_n,x_1]\mod N_{i+1}.
$$
Let $a=(a_1N_1,a_2N_2,\cdots)\in \Gamma,$ where $a_{i+1}\equiv a_i\mod N_i,\ a_i\in H,\ i\geq 1.$ Suppose that $a_1\in N_1$. We claim that there exist elements $g_1,\cdots, g_n\in \Gamma$, such that
\begin{equation}\label{apresentation}
a=[g_1,h(x_1)]\cdots [g_n,h(x_n)].
\end{equation}
We construct  $n$ elements $(g_i)_{1\leq i\leq n}$  in the inverse limit as a sequence of elements in the corresponding quotient groups:
$$
g_i=(y_1^iN_1,y_2^iN_2,\cdots)\in \Gamma,\ 1\leq i\leq n,
$$
with the elements $y_j^i$ constructed by induction as follows. Since $a_1\in N_1,$ $a_2$ also lies in $N_1.$ There exist elements $u_1,\cdots, u_n\in N_1,$ such that
$$
a_2\equiv [u_1,x_1]\cdots [u_n,x_n]\mod N_3.
$$
We set $y_1^i=y_2^i:=u_i$. Then
$$
a_2\equiv [y_2^1, x_1]\cdots [y_2^n, x_n]\mod N_3.
$$
Suppose we have constructed elements $y_1^i,\cdots, y_k^i,\ 1\leq i\leq n$ such that
\begin{align*}
& y_{j+1}^i\equiv y_j^i\mod N_j,\ 1\leq j\leq k-1\\
& a_j\equiv [y_j^1,x_1]\cdots [y_j^n, x_n]\mod N_{j+1},\ 1\leq j\leq k
\end{align*}
and
$$
a_{k+1}\equiv [y_k^1,x_1]\cdots [y_k^n, x_n]\mod N_{k+1}.
$$
There exists $r_{k+2}\in N_{k+1},$ such that
$$
a_{k+1}=[y_k^1,x_1]\cdots [y_k^n, x_n]r_{k+2}.
$$
Now we find elements $v_i\in N_k,$ such that
$$
r_{k+2}\equiv [v_1,x_1]\cdots [v_n,x_n]\mod N_{k+2}.
$$
We set $y_{k+1}^i:=y_k^iv_i,\ 1\leq i\leq n$. It follows that
$$
a_{k+1}\equiv [y_{k+1}^1, x_1]\cdots [y_{k+1}^n, x_n]\mod N_{k+2}.
$$
Now the result follows from the condition that $H/N_1$ is normally generated by elements $x_1N_1,\cdots, x_nN_n$ and the presentation (\ref{apresentation}).
\end{proof}
\begin{remark}\label{normal}
Let $G$ be a finitely generated group and $\{\gamma_i(G)\}_{i\geq 1}$ its lower central series. The completion map $G\to \widehat G:={\sf lim}\ G/\gamma_i(G)$ induces isomorphisms of lower central quotients $G/\gamma_i(G)\buildrel{\sim}\over\to \widehat G/\gamma_i(\widehat G),\ i\geq 1$; (see \cite{BS}). Hence, by Lemma \ref{lemmabs}, $\widehat G$ is normally generated by images of $G$.
\end{remark}
\subsection{The second homology of nilpotent completions}  We need certain information on the second homology groups, to be used in the proof of the main theorem.  In this paragraph we recall a useful relation between the second homology  $H_2(G)$ of a finitely generated group and that of its nilpotent completion $ H_2(\widehat G).$   Let $F$ be as above a finitely generated free group.  Bousfield shows in \cite{Bousfield} that $H_2(\widehat F)$ is uncountable. In fact, $H_2(\widehat F)$ maps onto the exterior square of 2-adic integers. An exposition of Bousfield's method of study;  the homology of pro-nilpotent completions is given in \cite{IM}. Let $G$ be a finitely presented group, $F$ a free group, $F\to G$ an epimorphism, $R=ker\{F\to G\}.$ The quotients
$$
\mathcal B_k(G):=\frac{R\cap
\gamma_k(F)}{[R,\underbrace{F,\cdots, F}_{k-1}]}
$$
are known as {\it Baer invariants} of $G.$  They are naturally isomorphic to the first (non-additive) derived functors of $k$-th lower central quotients
$G/\gamma_k(G)$. They form a tower of groups.

We will need the following property ;(see lemma 6.1 \cite{IM}):

\begin{prop}\label{lim1} Assume that
$$
{\sf lim}^1 \mathcal B_k(G)=0.
$$

 Then the cokernel of the natural map $H_2(\widehat F)\to H_2(\widehat G)$
is isomorphic to a quotient of the homology group $H_2(G)$ as   in the diagram
$$
\xymatrix{ &H_2(G)\ar[d]\ar@{>>}[rd] \\
H_2(\widehat F)\ar[r]&H_2(\widehat G)\ar[r]& coker }
$$
\end{prop}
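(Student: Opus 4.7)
The plan is to run, for each $k \ge 2$, the Stallings five-term homology exact sequence attached to the group extension
\[
1 \longrightarrow R/(R \cap \gamma_k F) \longrightarrow F/\gamma_k F \longrightarrow G/\gamma_k G \longrightarrow 1,
\]
identify the resulting cokernel at each finite level as a natural quotient of $H_2(G)$, and then pass to the inverse limit in $k$, using the hypothesis ${\sf lim}^1\,\mathcal{B}_k(G)=0$ to control the derived inverse limit error terms.

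At finite level $k$, Stallings' sequence reads
\[
H_2(F/\gamma_k F) \longrightarrow H_2(G/\gamma_k G) \longrightarrow R/([R,F](R\cap \gamma_k F)) \longrightarrow F^{\mathrm{ab}} \longrightarrow G^{\mathrm{ab}} \longrightarrow 0.
\]
Since $F$ is free, the cokernel of the first arrow coincides with the kernel of $R/([R,F](R\cap \gamma_k F)) \to F^{\mathrm{ab}}$, namely $(R\cap [F,F])/([R,F](R\cap \gamma_k F))$. By the Hopf formula $H_2(G) \cong (R\cap [F,F])/[R,F]$, so this finite-stage cokernel is visibly a quotient of $H_2(G)$, via the surjection induced by the inclusion $[R,F] \subseteq [R,F](R\cap \gamma_k F)$.

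Next, the canonical projections $\widehat F \onto F/\gamma_k F$ and $\widehat G \onto G/\gamma_k G$ assemble these into a ladder of five-term sequences indexed by $k$. The tower of finite-stage cokernels has surjective transition maps (the denominators shrink as $k$ grows), so its own ${\sf lim}^1$ vanishes automatically. The delicate column is the $H_2(G/\gamma_k G)$-column: the Baer invariant $\mathcal{B}_k(G)$ is precisely what governs the change of $H_2(G/\gamma_k G)$ along the tower, so the assumption ${\sf lim}^1\,\mathcal{B}_k(G)=0$ is exactly what allows exactness of the five-term sequence to be retained after passing to the inverse limit. One then reads off that $\mathrm{coker}(H_2(\widehat F) \to H_2(\widehat G))$ is a quotient of ${\sf lim}_k\,(R\cap [F,F])/([R,F](R\cap \gamma_k F))$, and hence a quotient of $H_2(G)$ via the diagonal surjection.

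The main obstacle I anticipate is the comparison between $H_2$ of the pro-nilpotent completion and the inverse limit of $H_2$ of its nilpotent quotients: there is no tautological isomorphism between them, and one must argue through a derived-inverse-limit spectral sequence applied to the entire five-term ladder. The substance of the argument lies in verifying that ${\sf lim}^1\,\mathcal{B}_k(G)=0$ is both the natural and the sufficient vanishing hypothesis to clear the error terms on the $G$-side, while the $F$-side is unproblematic because $F$ is free and the tower $\{H_2(F/\gamma_k F)\}$ is already well-behaved.
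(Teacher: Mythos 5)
The paper itself does not prove this proposition; it quotes it from Lemma 6.1 of \cite{IM}. Judging your argument on its own terms, it has a genuine gap at exactly the point you yourself flag as ``the main obstacle'': the passage from the tower of five-term sequences for $1\to R/(R\cap\gamma_kF)\to F/\gamma_kF\to G/\gamma_kG\to 1$ to a statement about $H_2(\widehat F)\to H_2(\widehat G)$. Your finite-level computation is correct (the cokernel at stage $k$ is $(R\cap[F,F])/([R,F](R\cap\gamma_kF))$, a quotient of $H_2(G)$ by Hopf's formula), but the proposed bridge --- a derived-inverse-limit argument applied to the five-term ladder, with $\lim^1\mathcal B_k(G)=0$ clearing the error terms --- cannot work. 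The obstruction is already visible in the paper: $H_2(\widehat F)$ is uncountable (Bousfield), whereas the tower $H_2(F/\gamma_kF)\cong\gamma_kF/\gamma_{k+1}F$ has \emph{zero} transition maps, so both its $\lim$ and its $\lim^1$ vanish. Hence $H_2$ of the completion is not computed by $\lim$ and $\lim^1$ of the finite-level $H_2$'s, and --- contrary to your closing sentence --- it is precisely the $F$-side, not the $G$-side, that exposes the failure: the ``well-behaved'' left column of your ladder computes $0$ where the answer is uncountable. No hypothesis on $\mathcal B_k(G)$ can repair that. A separate, smaller slip: even at the level of towers, $\lim_k(R\cap[F,F])/([R,F](R\cap\gamma_kF))$ is a \emph{completion} of $H_2(G)$, not automatically a quotient of it; to make it a quotient you need $\lim^1$ of the kernels $[R,F](R\cap\gamma_kF)/[R,F]$ --- which are quotients of $\mathcal B_k(G)$ --- to vanish. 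That is where the hypothesis genuinely earns its keep, not in ``retaining exactness'' of the $H_2(G/\gamma_kG)$ column.

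A route that does work, and is in the spirit of \cite{IM} and of Lemma \ref{lemmabs} of this paper, is to apply the five-term sequence \emph{once}, to the extension of completed groups $1\to \overline R\to\widehat F\to\widehat G\to 1$, where $\overline R=\ker(\widehat F\to\widehat G)$. Since $H_1(\widehat F)\cong F^{\mathrm{ab}}$, this identifies $\mathrm{coker}\bigl(H_2(\widehat F)\to H_2(\widehat G)\bigr)$ with $\ker\bigl(\overline R/[\overline R,\widehat F]\to F^{\mathrm{ab}}\bigr)$. The substantive step is then to prove $\overline R=R\cdot[\overline R,\widehat F]$, i.e.\ that $R$ surjects onto $\overline R/[\overline R,\widehat F]$; this is an approximation argument over the tower $R/(R\cap\gamma_kF)$, analogous to the successive-correction scheme in the proof of Lemma \ref{lemmabs}, whose convergence obstruction is controlled by $\lim^1\mathcal B_k(G)$. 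Granting that surjectivity, the cokernel is the image of $\ker\bigl(R/[R,F]\to F^{\mathrm{ab}}\bigr)=(R\cap[F,F])/[R,F]\cong H_2(G)$, which yields exactly the triangle in the statement. I would encourage you to redo the argument along these lines; the finite-level Hopf-formula bookkeeping you did is then absorbed into identifying $\overline R/[\overline R,\widehat F]$ rather than into an inverse limit of homology groups that does not converge to the right thing.
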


The  condition holds, for example, for all finitely presented groups $G$ with finite $H_2(G)$.

\section{Proof of Theorem 1.}

We first prove a  statement similar to the claim of the main theorem but regarding  a  simpler,  virtually nilpotent group $G,$
rather than the free group $F$ above.

Given any group $G$ we consider the space $cof_G$ defined as  the cofibre of

$$
K(G,1)\to K(\widehat G,1).
$$

Thus $cof_F$ is just Orr's space $K_\omega.$
In all our cases this cofibre space will be simply connected, since we only consider groups $G$ for which $\widehat G$ is normally generated by the images of $G$. Notice that it is the case for any finitely generated group; (see Remark \ref{normal}).

Let $G$ be given by:
$$
G=\langle a,b\ |\ b^a=b^{-1},\ a^2=1\rangle=\mathbb Z\rtimes C_2,
$$
where $C_2=\langle a\ |\ a^2=1\rangle$ is the two elements  cyclic group acting nontrivially on the integers $\mathbb Z$.

We are interested in the third homotopy group of $cof_G.$ Note that by an easy computation below the  third homology
$H_3(cof_G)$ is infinitely generated, and thus the third homotopy group of this space does not vanish, the space being simply connected. But we need a more precise statement as follows.

\begin{lemma}\label{KG} Let $G$ be the semidirect product  as above. The kernel of the Hurewicz map
 $$\pi_3(K_G)\twoheadrightarrow H_3(K_G)$$
is infinitely generated.
\end{lemma}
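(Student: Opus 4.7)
The plan is to apply Whitehead's exact sequence \ref{whitehead1} to the simply connected space $K_G$: the kernel of the Hurewicz map in dimension three coincides with the cokernel of $w_{K_G}\colon H_4(K_G)\to \Gamma^2(\pi_2(K_G))$. Since tensoring with $\mathbb{Q}$ is right-exact, it is enough to show that $\mathrm{coker}(w_{K_G})\otimes\mathbb{Q}$ has infinite $\mathbb{Q}$-dimension; a finitely generated abelian group cannot have this property.

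The first computational step is to evaluate $H_*(K_G;\mathbb{Q})$ from the cofibre sequence $K(G,1)\to K(\widehat G,1)\to K_G$. Because $G\cong \mathbb{Z}/2\ast\mathbb{Z}/2$ is rationally acyclic in positive degrees, the map $K(\widehat G,1)\to K_G$ is a rational homology equivalence; being induced by a map of spaces, it is also an isomorphism of homology coalgebras. The Lyndon-Hochschild-Serre spectral sequence for $1\to \mathbb{Z}_2\to \widehat G\to C_2\to 1$, with $C_2$ acting on $\mathbb{Z}_2$ by negation and hence on $\Lambda^n \mathbb{Z}_2$ by $(-1)^n$, degenerates rationally (since $|C_2|=2$ is a unit in $\mathbb{Q}$) and gives $H_n(\widehat G;\mathbb{Q})\cong \Lambda^n \mathbb{Q}_2$ for even $n$ and zero for odd $n$. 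In particular $\pi_2(K_G)\otimes\mathbb{Q}=\Lambda^2\mathbb{Q}_2$, $H_4(K_G;\mathbb{Q})=\Lambda^4\mathbb{Q}_2$, and $\Gamma^2(\pi_2(K_G))\otimes\mathbb{Q}=\mathrm{Sym}^2(\Lambda^2\mathbb{Q}_2)$.

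The second step is to identify $w_{K_G}\otimes \mathbb{Q}\colon \Lambda^4\mathbb{Q}_2\to \mathrm{Sym}^2(\Lambda^2\mathbb{Q}_2)$ explicitly. For any 1-connected space $X$, the rational Whitehead map factors, up to a nonzero rational scalar, as the symmetrized reduced diagonal $H_4(X;\mathbb{Q})\to H_2(X;\mathbb{Q})^{\otimes 2}\to \mathrm{Sym}^2 H_2(X;\mathbb{Q})$, a consequence of the Postnikov construction $X\to K(\pi_2 X,2)$ together with the H-space structure on $K(A,2)$. Under the identification $H_*(K_G;\mathbb{Q})\cong \Lambda^{\mathrm{even}}\mathbb{Q}_2$ with its shuffle coproduct, a short calculation shows that $w_{K_G}\otimes\mathbb{Q}$ is the Pl\"ucker embedding
$$v_1\wedge v_2\wedge v_3\wedge v_4\;\mapsto\;(v_1\wedge v_2)(v_3\wedge v_4)-(v_1\wedge v_3)(v_2\wedge v_4)+(v_1\wedge v_4)(v_2\wedge v_3).$$
Fixing a Hamel basis $\{e_\alpha\}$ of $\mathbb{Q}_2$, the diagonal elements $(e_\alpha\wedge e_\beta)^2\in \mathrm{Sym}^2(\Lambda^2\mathbb{Q}_2)$ for distinct $\alpha,\beta$ are linearly independent and outside the image, since every Pl\"ucker basis image is supported on wedge pairs involving four distinct basis vectors. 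There being uncountably many such pairs, $\mathrm{coker}(w_{K_G}\otimes\mathbb{Q})$ has uncountable $\mathbb{Q}$-dimension, so $\mathrm{coker}(w_{K_G})$ is not finitely generated as an abelian group.

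I expect the main technical hurdle to be the identification of $w_{K_G}\otimes\mathbb{Q}$ with the Pl\"ucker (symmetrized shuffle) map. This depends on checking that the coproduct on $H_*(K_G;\mathbb{Q})$ induced by the diagonal of $K_G$ corresponds, under the rational homology equivalence with $K(\widehat G,1)$, to the shuffle coproduct on $\Lambda^{\mathrm{even}}\mathbb{Q}_2$, and on the standard but non-trivial fact that rationally Whitehead's map is the symmetrized reduced diagonal coproduct.
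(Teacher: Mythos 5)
Your proposal is correct in substance but takes a genuinely different route from the paper at the decisive step. Both arguments begin identically: reduce via Whitehead's sequence to showing that $\mathrm{coker}\bigl(w\colon H_4(K_G)\to\Gamma^2(\pi_2 K_G)\bigr)$ is infinitely generated, compute $\widehat G\cong\mathbb{Z}_2\rtimes C_2$, and use the LHS spectral sequence plus the rational acyclicity of the infinite dihedral group to get $H_{2k}(K_G)\cong\Lambda^{2k}(\mathbb{Z}_2)$ (a $\mathbb{Q}$-vector space) and zero in odd degrees. Where you diverge is in how the cokernel of $\Lambda^4\to\Gamma^2(\Lambda^2)$ is shown to be large: you identify $w\otimes\mathbb{Q}$ explicitly as the Pl\"ucker (symmetrized reduced diagonal) map and exhibit the uncountable family of diagonal classes $(e_\alpha\wedge e_\beta)^2$ surviving in the cokernel. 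The paper instead never computes $w$ at all: it uses naturality of $w$ along the chain $cof_G\to cof_1\leftarrow \mathbb{Q}_\infty K(\mathbb{Z}_2\otimes\mathbb{Q}\rtimes C_2,1)\to\mathbb{Q}_\infty K(\mathbb{Q}^{\oplus n}\rtimes C_2,1)$ and observes that for $n=2,3$ the source $\Lambda^4(\mathbb{Q}^{\oplus n})$ vanishes while the target $\Gamma^2(\Lambda^2(\mathbb{Q}^{\oplus n}))$ is a nonzero $\mathbb{Q}$-vector space, so the cokernel surjects onto an infinitely generated group whatever $w$ is. That trick is engineered precisely to avoid the one step you correctly flag as your main technical hurdle: the identification of the rational Whitehead map with the symmetrized reduced diagonal (equivalently, the dual of the cup-product map $\mathrm{Sym}^2 H^2\to H^4$ coming from $X\to K(\pi_2X,2)$), together with the identification of the coalgebra structure on $H_*(K_G;\mathbb{Q})$ with the shuffle coproduct on $\Lambda^{\mathrm{even}}(\mathbb{Z}_2\otimes\mathbb{Q})$. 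These facts are standard and your sketch of why they hold is the right one, but as written they are asserted rather than proved, and they constitute the only real work in your argument; if you want a self-contained proof you should either supply that identification or adopt the paper's vanishing-of-$\Lambda^4$ comparison, which buys you the conclusion with no knowledge of $w$. In exchange, your computation is sharper: it shows the cokernel has uncountable $\mathbb{Q}$-dimension and locates explicit classes generating it, rather than merely a surjection onto some infinite-dimensional quotient.
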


$$$$

\begin{proof}

The following argument  uses the naturality of Whitehead's map  $w:H_4(X)\to \Gamma^2(H_2(X))$ with respect to a sequence of maps involving
rational localizations:

$$
cof_G\longrightarrow cof_1 \longleftarrow {\mathbb Q}_\infty K(\mathbb Z_2\otimes \mathbb  Q\rtimes C_2,1)\longrightarrow \mathbb  Q_\infty K({\mathbb Q}^{\oplus n}\rtimes C_2,1)
$$

Spaces in this sequence are discussed below.

We consider  the cokernel of $w$ for the left hand space $cof_G$ by reducing the claims  to an estimate the cokernel of $w$ for the right most space.

In order to prove the lemma, using the above exact sequence of Whitehead for the case of  1-connected spaces
  it is enough to show that the cokernel of the natural map $H_4(cof_G)\to \Gamma^2(\pi_2(cof_G))$
  is infinitely generated. Since the space $cof_G$  is simply connected we have $H_2( cof_G)\cong \pi_2 (cof_G),$
  thus we shall consider the second homology in what follows.

It will turn out that in the above sequence of spaces, the two maps on the left induce isomorphisms on the cokernel of the Whitehead map $w$
while the map on the right induces a surjection onto an infinitely generate group, thus completing the proof.

First notice that the nilpotent completion of $G$ is just a semi-direct product of the 2-adic integers witht the cyclic group $C_2$ namely,
 $\widehat G\cong  \mathbb Z_2\rtimes C_2.$ This follows from a direct
computation of the nilpotent quotients as a semi-direct product.

Consider  the map between cofibre sequences
$$
\xymatrix{K(\mathbb Z\rtimes C_2,1) \ar@{->}[r]\ar@{->}[d] & K(\mathbb Z_2\rtimes C_2,1)\ar@{->}[r]\ar@{->}[d] & cof_G\ar@{->}[d]\\ K(\mathbb Q\rtimes C_2,1) \ar@{->}[r] & K(\mathbb Z_2\otimes \mathbb Q\rtimes C_2,1) \ar@{->}[r] & cof_1}
$$
Both spaces $cof_G$ and  $cof_1$ are simply-connected. The homology groups $H_i(\mathbb Z\rtimes C_2)$ can be computed using  the usual  spectral sequence for the group extension $\mathbb Z
\hookrightarrow \mathbb Z\rtimes C_2\twoheadrightarrow C_2$. The second stage of this spectral sequence stabilizes due to the existence of the splitting map $C_2\hookrightarrow (\mathbb Z\rtimes C_2):$
$$
E_{i,j}^2=\begin{cases} H_i(C_2,\langle b\rangle)=\mathbb Z/2,\ (i,j)=(2k,1),\ k\geq 0\\ H_i(C_2)=\mathbb Z/2,\ (i,j)=(2k+1,0),\ k\geq 0,\\ \mathbb Z,\ (i,j)=(0,0),\\ 0\ \text{in\ other\ cases}\end{cases}
$$
In particular, we get
$$
H_i(\mathbb Z\rtimes C_2)=\begin{cases} \mathbb Z/2\oplus\mathbb Z/2,\ i\ \text{odd}\\ 0,\ i\ \text{even}>0\end{cases}
$$
In the same way we compute the homology groups $H_i(\mathbb Z_2\rtimes C_2).$ In this case, the spectral sequence has the following form
$$
E_{i,j}^2=\begin{cases} \Lambda^{2k}(\mathbb Z_2),\ (i,j)=(0,2k),\ k\geq 1,\\ H_i(C_2,\langle b\rangle)=\mathbb Z/2,\ (i,j)=(2k,1),\ k\geq 0\\ H_i(C_2)=\mathbb Z/2,\ (i,j)=(2k+1,0),\ k\geq 0,\\ \mathbb Z,\ (i,j)=(0,0),\\ 0\ \text{in\ other\ cases}\end{cases}
$$
In particular,
$$
H_i(\mathbb Z_2\rtimes C_2)=\begin{cases} \mathbb Z/2\oplus\mathbb Z/2,\ i\ \text{odd}\\ \Lambda^{2k}(\mathbb Z_2),\ i=2k,\ k\geq 1\end{cases}
$$
The monomorphism $\langle b\rangle=\mathbb Z\hookrightarrow \mathbb Z_2$ induces isomorphisms of homology groups
$$
H_i(C_2, \langle b\rangle)\simeq H_i(C_2, \mathbb Z_2).
$$
Hence, the homology groups of $cof_G$ are the following
$$
H_i(cof_G)=\begin{cases} \Lambda^{2k}(\mathbb Z_2),\ i=2k,\ k\geq 1,\\ 0,\ i=2k+1\end{cases}
$$
Similarly, the homology groups of $cof_1$ are the following
$$
H_i(cof_1)=\begin{cases} \Lambda^{2k}(\mathbb Z_2\otimes \mathbb Q),\ i=2k,\ k\geq 1,\\ 0,\ i=2k+1\end{cases}
$$
Note that all exterior powers $\geq 2$ of $\mathbb Z_2$ are $\mathbb Q$-vector spaces. To see that, we present the exterior power as a natural quotient of the tensor power, after tensoring it with $\mathbb Z/l, l\geq 2,$ one obtains a finite cyclic group, whose image in the exterior power is zero. That is, the exterior powers $\geq 2$ of $\mathbb Z_2$ are divisible. Since these groups are torsion-free, we conclude that they are $\mathbb Q$-vector spaces.
Thus the map $cof_G\to cof_1$ is a homology equivalence. Since $cof_G$ and $cof_1$ are simply connected, $cof_G \to cof_1$ is a homotopy equivalence.

Now we consider the  cokernel of  the Whitehead natural transformation   $H_4\to \Gamma^2H_2 $ for the 1-connected  space $cof_1$  which is just

$$Coker(\Lambda^4(\mathbb Z_2\otimes\mathbb Q)\to \Gamma^2 \Lambda^2(\mathbb Z_2\otimes\mathbb Q) ). $$

We show that this cokernel maps {\em onto} an infinitely generated  group  by comparing it to  the cokernel of the Whitehead map of
another space as follows:

First we notice that the map $   K(\mathbb Z_2\otimes \mathbb Q\rtimes C_2,1)\to cof_1$ induces an isomorphism on rational homology.
Therefore, we consider the Whitehead cokernel for the rational localization of the  relevant range. That is done by comparing it to another  Eilenberg-MacLane space
built using finite dimensional vector space over the rationals $\mathbb Q.$

For any $n$, there is an epimorphism $\mathbb Z_2\otimes \mathbb Q\to \mathbb Q^{\oplus n}$, which induces a  map between groups
$$
(\mathbb Z_2\otimes \mathbb Q)\rtimes C_2 \to \mathbb Q^{\oplus n}\rtimes C_2
$$
where the action of $C_2$ is negation. The group $\mathbb Q^{\oplus n}\rtimes C_2$ is $\mathbb Q$-perfect. We have a  map between $\mathbb Q$-completions
$$
\mathbb Q_\infty K(\mathbb Z_2\otimes \mathbb Q\rtimes C_2,1)\to \mathbb Q_\infty K(\mathbb Q^{\oplus n}\rtimes C_2,1).
$$
Both spaces $K(\mathbb Z_2\otimes \mathbb Q\rtimes C_2,1)$ and $K(\mathbb Q^{\oplus n}\rtimes C_2,1)$ are virtually nilpotent, hence their $\mathbb Q$-completions are the same as  $\mathbb Q$-localizations, in particular, the $\mathbb Q$-homology groups of their $\mathbb Q$-completions are the same as those of the spaces (see proposition 3.4 in \cite{DDK}).
The natural maps between Whitehead sequences
$$
\xymatrix{H_4\mathbb Q_\infty K(\mathbb Z_2\otimes \mathbb Q\rtimes C_2,1)\ar@{->}[r]\ar@{->}[d] & \Gamma^2(H_2\mathbb Q_\infty K(\mathbb Z_2\otimes \mathbb Q\rtimes C_2,1))\ar@{->}[d]\\
H_4\mathbb Q_\infty K(\mathbb Q^{\oplus n}\rtimes C_2,1) \ar@{->}[r] &\Gamma^2(H_2\mathbb Q_\infty K(\mathbb Q^{\oplus n}\rtimes C_2,1))}
$$
are the following
$$
\xymatrix{\Lambda^4(\mathbb Z_2\otimes \mathbb Q)\ar@{->}[r] \ar@{->>}[d]& \Gamma^2(\Lambda^2(\mathbb Z_2\otimes \mathbb Q))\ar@{->>}[d]\\ \Lambda^4(\mathbb Q^{\oplus n}) \ar@{->}[r] &\Gamma^2(\Lambda^2(\mathbb Q^{\oplus n}))}
$$
Now observe that, for $n=2,3$, the group $\Lambda^4(\mathbb Q^{\oplus n})$ is zero, but the group $\Gamma^2(\Lambda^2(\mathbb Q^{\oplus n}))$ is infinitely generated. Therefore, the cokernel of the upper horizontal map in the last diagram must be an infinitely generated group.

We conclude that the cokernel of the map
$$
H_4(cof_G)\to \Gamma^2(\pi_2(cof))
$$
is an infinite divisible group as claimed.
\end{proof}

\subsection{Proof of the main theorem }

We use the  statement in Lemma \ref{KG} about  $cok_G$ for the semi-direct product $G$ as above  to deduce a similar statement about the free
group $F.$
Consider a free group $F$ of rank 2, and an epimorphism $F\to G$  and construct the map
 between cofibres:
$$
\xymatrix{K(F,1) \ar@{->}[d] \ar@{->}[r] & K(\widehat F,1)\ar@{->}[r]\ar@{->}[d] & K_\omega\ar@{->}[d]\\ K(G,1)\ar@{->}[r] & K(\widehat G,1)\ar@{->}[r] & cof_G}
$$
In light of  $H_1(F)\cong H_1(\widehat F)$ above we have an isomorphism $H_2(\widehat F)\simeq H_2(K_\omega).$
The cokernel of the natural map $$H_2(\widehat F)\simeq H_2(K_\omega)\to H_2(cof_G)=\Lambda^2(\mathbb Z_2)$$
is an isomorphism, by  Proposition \ref{lim1}  to a quotient of $H_2(G)=\mathbb Z/2$ (see preliminaries), but $H_2(cof_G)$ is divisible, hence we conclude that $H_2(\widehat F)\to H_2(cof_G)$ is an epimorphism. Hence the middle vertical map in the
natural map between Whitehead exact sequences, is a surjection:
$$
\xymatrix{H_4(\widehat F)\ar@{->}[r]\ar@{->}[d] & \Gamma^2(H_2(\widehat F))\ar@{->>}[r]\ar@{->>}[d] & coker\ar[r]^{1-1}\ar@{->>}[d]^{?}& \pi_3(K_\omega)\ar@{->}[d]\\ H_4(cof_G)\ar@{->}[r] & \Gamma^2(H_2(cof_G))\ar@{->>}[r] & coker\ar[r]^{1-1}& \pi_3(cof_G)}
$$
It is clear now that the arrow with a question mark must be a surjection:
Since the map $H_2(\widehat F)\to H_2(cof_F)=H_2 K_\omega$ is an epimorphism,  the same is true for $\Gamma^2(H_2(\widehat F))\to \Gamma^2(H_2(cof_G))$, and we conclude that the map
$$
H_4(\widehat F)\to \Gamma^2(H_2(\widehat F))
$$
has a cokernel which maps onto an infinite divisible group. Theorem 1.1 now follows from the Whitehead exact sequence for the simply-connected space $K_\omega$:
$$
H_4(\widehat F)\to \Gamma^2(H_2(\widehat F))\to \pi_3(K_\omega)\to H_3(\widehat F)\to 0.
$$

\section{Localizaion of virtually nilpotent spaces} Here we deduce from the above results  that, for a virtually nilpotent group $G$, the H$\mathbb Z$-localization of $K(G,1)$ is not, in general,  a $K(-,1).$ In fact we consider the simplest possible non trivial example of such group namely the semi-direct product $G$
above.
Recall from \cite{DDK} that, for a virtually nilpotent space $X$, the arithmetic square
$$
\xymatrix{X_{\mathbb Z}\ar@{->}[d] \ar@{->}[r] & X_{P}\ar@{->}[d]\\ X_{\mathbb Q}\ar@{->}[r] & X_{P,\mathbb Q}}
$$
is, up to homotopy, a fibre square. Here $P=\oplus \mathbb Z/p$ over all primes and, for a ring $R$, $X_R$ is the $HR$-localization.

Consider $X=K(G,1)$, where $G$ is the group from the previous proof, i.e. $G=\mathbb Z\rtimes C_2$. Clearly, $G$ is virtually nilpotent, it has an infinite cyclic subgroup of index two. Thus the space $X$ is virtually nilpotent and therefore, the arithmetic square for $X$ is a fibre square. Since the group $G$ is $\mathbb Q$-acyclic, $X_{\mathbb Q}$ is contractible and the arithmetic square degenerates to the following fibre sequence
$$
X_{\mathbb Z}\to X_{\mathbb Z/2}\to X_{\mathbb Z/2,\mathbb Q.}
$$
Clearly we can ignore all primes from $P$ except for $p=2$.

The $\mathbb Z/2$-localization $X_{\mathbb Z/2}$ coincides with $\mathbb Z/2$-completion and is equivalent to\\ $K(\mathbb Z_2\rtimes C_2,1)$. The space $K(\mathbb Z_2\rtimes C_2,1)$ is virtually nilpotent, hence its $\mathbb Q$-localization coincides with its $\mathbb Q$-completion. Therefore
$$
\pi_i(X_{\mathbb Z})\simeq \pi_{i+1}(\mathbb Q_{\infty}K(\mathbb Z_2\rtimes C_2,1)),
$$
for all $i\geq 2$. The natural map $\mathbb Z_2\rtimes C_2\to \mathbb Z_2\otimes \mathbb Q\rtimes C_2$ induces an isomorphism of $\mathbb Q$-homology groups, hence
$$
\pi_i(X_{\mathbb Z})\simeq \pi_{i+1}(\mathbb Q_{\infty}K(\mathbb Z_2\otimes \mathbb Q\rtimes C_2,1)).
$$
It follows from the previous section that the kernel of the third Hurewicz homomorphism for the space $\mathbb Q_{\infty}K(\mathbb Z_2\otimes \mathbb Q\rtimes C_2,1)$ contains an infinitely generated subgroup, hence $\pi_2(X_{\mathbb Z})\neq 0$.

\section*{Acknowledgement}
The research is supported by the Russian Science Foundation grant
N 16-11-10073. The first named author is grateful for the hospitality of Chebyshev Laboratory, St. Petersburg.

\end{document}